\documentclass{article}

\usepackage{amsmath,amssymb}
\usepackage{graphicx}
\usepackage{wrapfig}
\usepackage{here}
\usepackage{amsthm}
\usepackage{cancel}
\usepackage{mathtools}
\usepackage[all]{xy}
\usepackage{diagbox}
\usepackage{physics}
\usepackage[all]{xy}

\newtheorem{dfn}{Definition}
\newtheorem{lem}{Lemma}
\newtheorem{prop}{Proposition}
\newtheorem{thm}{Theorem}
\newtheorem{cor}{Corollary}
\newtheorem{exm}{Example}
\newtheorem{rmk}{Remark}

\usepackage{color}

\ifx \ocirc \undefined \def \ocirc #1{{\accent'27#1}}\fi

\title{Sufficiency of Unit Coefficients for Binary Orbits in Uniformly Weighted Linear Cellular Automata}
\author{Akane Kawaharada\thanks{
Address: 1-10-20, Hashirimizu, Yokosuka, Kanagawa, 239-8686, JAPAN.  
E-mail: aka@nda.ac.jp}
\\ \vspace{2pt}
Department of Mathematics, National Defense Academy of Japan}
\date{\today}

\begin{document}
\maketitle

\begin{abstract}
This paper investigates the classification of spatio-temporal patterns generated by linear cellular automata with uniform weights (LCA-UW) over the ring ${\mathbb Z} / n {\mathbb Z}$. 
While these systems are governed by the state size $n$ and a transition coefficient $c$, their combined influence produces a vast array of patterns that are difficult to organize through exhaustive observation.
We introduce a binary projection operator $\mathcal{B}$ to focus on the fundamental structural evolution (infinite binary orbits) of these automata. 
Our main result demonstrates a fundamental reduction principle.
For any coefficient $c$ that shares prime factors with $n$, the generated infinite binary orbit eventually coincides with the orbit of an LCA-UW with some reduced state size and a unit coefficient $c=1$. 
We prove that for a fixed $n$, there exist exactly $2^m - 1$ distinct types of binary orbits, where $m$ is the number of distinct prime factors of $n$. 
This theorem effectively collapses the two-dimensional parameter space $(n, c)$ into a one-dimensional search over $n$, providing a streamlined framework for the topological and fractal classification of LCA-UW dynamics.
\end{abstract}

\hspace{2.5mm} 
{\it Keywords} : linear cellular automata, binary orbits, state-size reduction, spatio-temporal patterns, modular arithmetic: \footnote{AMS subject classifications: 37B15, 68Q80, 11B50, 11B85, 28A80}

\section{Introduction}
\label{sec:intro}

A cellular automaton (CA) is a simple mathematical model capable of generating complex patterns. 
Since these patterns often exhibit fractal properties, many researchers have used CAs as fractal generators \cite{willson1984, culik1989, takahashi1992, haeseler1993, wolfram2002}.
Typically, fractals are characterized by a scalar value known as a fractal dimension (e.g., the Hausdorff, Box-counting, or packing dimension). 
However, distinct fractals can sometimes share the exact same dimensional value, as a single scalar cannot capture the full complexity of their structures.
To address this limitation, we developed a novel method for classifying fractals by employing one-variable functions that describe the spatio-temporal dynamics of the CA patterns.  
This approach allows us to preserve much more structural information regarding the fractal's structure \cite{kawa2022P, kawa2024, kawa2025}.
 
Our research has primarily focused on linear cellular automata (LCAs), which generate complex, self-organizing patterns from local algebraic rules. 
Martin et al. \cite{martin1984} shown that the complexity of the resulting dynamics increases significantly when the state space is the ring ${\mathbb Z} / n {\mathbb Z}$.
While the rule space of LCAs is vast, previous studies have shown that it can be partitioned into equivalence classes through algebraic transformations \cite{cattaneo1997}. 
Building upon these ideas of reduction, a previous study \cite{kawa2024P} categorized LCA orbits by focusing on the ``seed'' of the initial configuration. 
It demonstrated that any orbit from a single-site initial configuration is either isomorphic to the orbit for seed $1$ of the same state number or to a seed $1$ orbit of a smaller state number. 
Thus, for the single-site initial configuration, it is sufficient to consider only the case of seed $1$.

In the present paper, we extend this simplification principle by focusing not only on initial configurations but also on transition rules.
We classify orbits by identifying cases where different transition rules produce the same binary behavior.
To make our analysis more efficient, we employ a binary projection operator $\mathcal{B}$.
This approach is more practical than traditional isomorphism-based classification, which often fails to capture similar patterns if their state labels do not match perfectly.
As illustrated in Figure~\ref{fig:0000}, two spatio-temporal patterns may not be isomorphic in their multi-state form, yet they can produce identical binary orbits when projected by $\mathcal{B}$. 
A classification based strictly on isomorphism is often too granular, preventing us from recognizing the common structure behind systems that seem different.
By mapping the $n$ possible states onto a logical field (typically $\{0, 1\}$), the operator $\mathcal{B}$ allows us to abstract away specific numerical values and focus on the asymptotic structural framework of pattern development.

\begin{figure}[htb]
\centering
\begin{minipage}[b]{0.49\columnwidth}
\centering
\includegraphics[width=0.9\columnwidth]{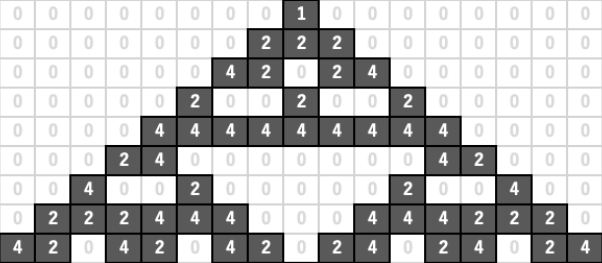}\\
(a)  $n=6$, $c=2$
\end{minipage}
\begin{minipage}[b]{0.49\columnwidth}
\centering
\includegraphics[width=0.9\columnwidth]{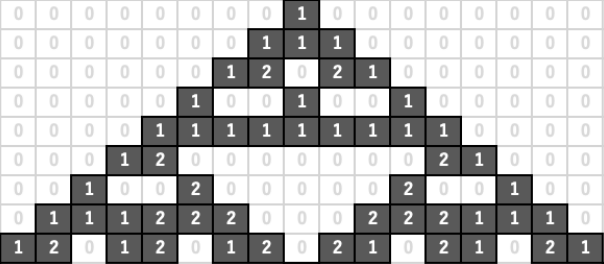}\\
(b) $n=3$, $c=1$
\end{minipage}
\caption{An example of non-isomorphic patterns that yield identical binary orbits.}
\label{fig:0000}
\end{figure}

Furthermore, we classify binary orbits based on their eventual coincidence. 
Our analysis reveals numerous cases where orbits differ during the first few steps but coincide perfectly thereafter. 
A representative example of this behavior is illustrated in Figure~\ref{fig:0001}. 
Although these two patterns are initially distinct, they eventually converge to the same binary orbit, suggesting that they belong to the same dynamical class.

\begin{figure}[htb]
\centering
\begin{minipage}[b]{0.49\columnwidth}
\centering
\includegraphics[width=0.9\columnwidth]{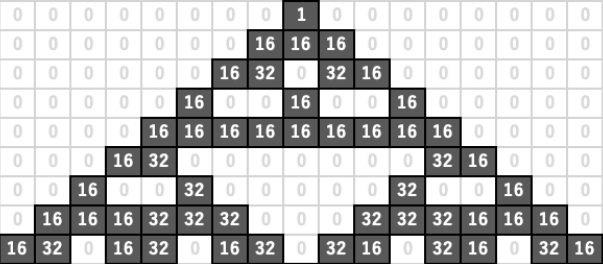}\\
(a) $n=48$, $c=16$
\end{minipage}
\begin{minipage}[b]{0.49\columnwidth}
\centering
\includegraphics[width=0.9\columnwidth]{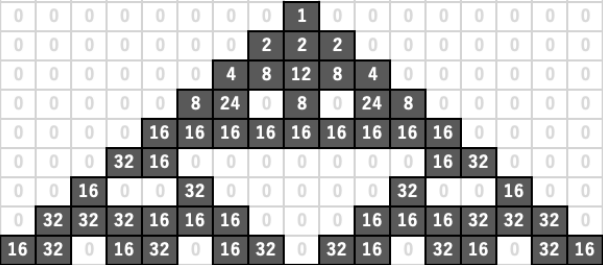}\\
(b) $n=48$, $c=2$
\end{minipage}
\caption{An example where orbits differ in the first few steps but eventually coincide in their binary representation.}
\label{fig:0001}
\end{figure}

This approach is particularly crucial in the study of linear cellular automata with uniform weights (LCA-UW), where the next state is determined by multiplying the neighborhood sum by a coefficient $c$. 
This introduces a two-dimensional parameter space $(n, c)$. 
Intuitively, one might expect that varying $c$ would generate an entirely new class of patterns for each $n$. 
However, our preliminary observations suggest that many regions within this parameter space are actually equivalent.
The purpose of this paper is to formally establish a reduction principle for binary orbits in LCA-UW based on their eventual coincidence. 
We show that for any number of states $n$, the infinite binary orbits generated by varying $c$ can be classified into a remarkably few number of finite types.
Crucially, all such orbits are eventually equivalent to a $c=1$ orbit for some effective state number. 
By proving this equivalence, we clarify that the entire binary patterns in LCA-UW can be fully understood by analyzing only the $c=1$ cases across all $n$. 
This reduction greatly simplifies the classification problem and provides a new perspective on the algebraic symmetries inherent in the dynamics of LCAs.

The remainder of this paper is organized as follows. 
Section~\ref{sec:pre} provides preliminaries on LCAs and reviews relevant established facts. 
Section~\ref{sec:main} presents our main results concerning a fundamental reduction principle.
We demonstrate that for any coefficient $c$ of LCA-UWs that shares prime factors with the number of states $n$, the generated infinite binary orbit eventually coincides with the orbit of an LCA-UW with a unit coefficient $c=1$ and a state size no greater than $n$.
Specifically, we show that the entire binary patterns of LCA-UWs can be fully characterized by examining only the cases where $c=1$ across all $n \in {\mathbb Z}_{\geq 2}$. 
Finally, Section~\ref{sec:cr} discusses our findings and highlights some possible avenues for future research.

\section{Preliminaries}
\label{sec:pre}

Throughout this paper, we adopt the following notation.

\begin{itemize}
\item For $n \in {\mathbb Z}_{\geq 2}$, let ${\mathbb Z} / n {\mathbb Z} := \{0, 1, \ldots, n-1\} \pmod n$ denote the ring of integers modulo $n$.
The order of this set is $|{\mathbb Z} / n {\mathbb Z}| = n$.
\item Let $({\mathbb Z} / n {\mathbb Z})^{\ast} := ({\mathbb Z} / n {\mathbb Z}) \setminus \{0\}$.
\item For $c \in ({\mathbb Z} / n {\mathbb Z})^{\ast}$, we define $c {\mathbb Z} / n {\mathbb Z} := \{ ca \pmod n \mid a \in {\mathbb Z} / n {\mathbb Z} \}$. 
This subset $c {\mathbb Z} / n {\mathbb Z}$ is the principal ideal generated by $c$ in the ring ${\mathbb Z} / n {\mathbb Z}$, and its order is given by $n/\gcd(c, n)$.
\item The unit group of ${\mathbb Z} / n {\mathbb Z}$ is denoted by $({\mathbb Z} / n {\mathbb Z})^{\times} := \{ a \in \mathbb{Z}/n\mathbb{Z} \mid \gcd(a, n) = 1 \}$.
\item Let $n = \prod_{j=1}^m p_j^{k_j}$ be the prime factorization of $n$, where each $p_j$ is a distinct prime and $k_j \in \mathbb{Z}_{>0}$ for $1 \leq j \leq m$. We define the maximum exponent $K_n := \max_{1 \leq j \leq m} \{ k_j \}$.
\end{itemize}

In this study, we consider the following CAs.

\begin{dfn}[Linear cellular automaton]
\label{dfn:LCA}
Let $J \in {\mathbb Z}_{\geq 1}$, $c_j \in ({\mathbb Z} / n {\mathbb Z})^{\ast}$ and $\vb*{r}_j \in {{\mathbb Z}^D}$ for $1 \leq j \leq J$, where $D \in {\mathbb Z}_{\geq 1}$.
For a configuration $x \in {({\mathbb Z} / n {\mathbb Z})}^{{\mathbb Z}^D}$, a $D$-dimensional \textbf{linear cellular automaton (LCA)} with $n$ states and coefficients $c_j \in ({\mathbb Z} / n {\mathbb Z})^{\ast}$, denoted by $T_{n, \{c_j\}} : {({\mathbb Z} / n {\mathbb Z})}^{{\mathbb Z}^D} \to {({\mathbb Z} / n {\mathbb Z})}^{{\mathbb Z}^D}$, is defined at each site $\vb*{i} \in {{\mathbb Z}^D}$ by
\begin{align}
(T_{n, \{c_j\}} x)_{\vb*{i}} = \sum_{j=1}^J c_j x_{\vb*{i} + \vb*{r}_j} \pmod n.
\end{align}

\end{dfn}
\begin{dfn}[LCA with uniform weights]
\label{dfn:LCA-UW}
Let $J \in {\mathbb Z}_{\geq 1}$, $c \in ({\mathbb Z} / n {\mathbb Z})^{\ast}$ and $\vb*{r}_j \in {{\mathbb Z}^D}$ for $1 \leq j \leq J$, where $D \in {\mathbb Z}_{\geq 1}$.
For a configuration $x \in {({\mathbb Z} / n {\mathbb Z})}^{{\mathbb Z}^D}$, a $D$-dimensional \textbf{linear cellular automaton with uniform weights (LCA-UW)} with $n$ states and a coefficient $c \in ({\mathbb Z} / n {\mathbb Z})^{\ast}$, denoted by $T_{n, c} : {({\mathbb Z} / n {\mathbb Z})}^{{\mathbb Z}^D} \to {({\mathbb Z} / n {\mathbb Z})}^{{\mathbb Z}^D}$, is defined at each site $\vb*{i} \in {{\mathbb Z}^D}$ by
\begin{align}
(T_{n, c} x)_{\vb*{i}} = \sum_{j=1}^J c x_{\vb*{i} + \vb*{r}_j} \pmod n.
\end{align}
\end{dfn}

In this paper, we focus exclusively on the case of LCA-UW, rather than general LCAs.
We consider the orbit starting from the following initial configuration $x_o \in {({\mathbb Z} / n {\mathbb Z})}^{{\mathbb Z}^D}$,
\begin{align}
(x_o)_{\vb*{i}} = 
\begin{cases} 
1 & \quad ({\vb*{i}} = {\vb*{0}}),\\
0 & \quad ({\vb*{i}} \neq {\vb*{0}}).
\end{cases}
\end{align}
We refer to this configuration $x_o$ as the \textbf{single-site seed (SSS)}.
For a given number of states $n$ and a coefficient $c$, we define the evolution of the LCA-UW as a discrete-time dynamical system. 
The orbit originating from the initial configuration SSS $x_o$ is denoted by $\{T_{n, c}^t x_o\}_{t \geq 0}$, where $T_{n, c}^t$ represents the $t$-th iterate of $T_{n, c}$.

According to the algebraic consideration by Martin et al. \cite{martin1984}, the state of an LCA generated from the initial configuration $x_o$ can be expressed in terms of multinomial coefficients as follows.

\begin{lem}
\label{lem:coef00}
The state at time step $t \in {\mathbb Z}_{\geq 0}$ and site $\vb*{i} \in {\mathbb Z}^D$ of the LCA-UW $T_{n, c}$ starting from the SSS $x_o$ is given by
\begin{align}
\label{eq:coef00}
(T_{n, c}^t x_o)_{\vb*{i}} &= 
\sum_{\substack{\sum_{j=1}^J t_j = t\\ \sum_{j=1}^J t_j \vb*{r}_j = - \vb*{i} }} \frac{t!}{t_1! t_2! \ldots t_J!} c^t \pmod n,
\end{align}
where the sum is taken over all sequences of non-negative integers $(t_1, t_2, \ldots, t_J)$ satisfying the given conditions.
\end{lem}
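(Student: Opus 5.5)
The plan is to prove the formula by induction on $t$, after first writing $T_{n,c}$ as multiplication by a Laurent polynomial. To each configuration $x$ with finite support we associate the formal Laurent polynomial
\begin{align*}
P_x(\vb*{X}) = \sum_{\vb*{i} \in \mathbb{Z}^D} x_{\vb*{i}} \vb*{X}^{\vb*{i}} \in (\mathbb{Z}/n\mathbb{Z})[X_1^{\pm 1}, \ldots, X_D^{\pm 1}],
\end{align*}
where $\vb*{X}^{\vb*{i}} = X_1^{i_1} \cdots X_D^{i_D}$. Substituting $\vb*{k} = \vb*{i} + \vb*{r}_j$ in Definition~\ref{dfn:LCA-UW} gives $\sum_{\vb*{i}} c\, x_{\vb*{i}+\vb*{r}_j} \vb*{X}^{\vb*{i}} = c\, \vb*{X}^{-\vb*{r}_j} P_x$. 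Hence
\begin{align*}
P_{T_{n,c}x} = c \Bigl( \sum_{j=1}^J \vb*{X}^{-\vb*{r}_j} \Bigr) P_x \pmod n .
\end{align*}
This linearity step is routine. The only care needed is that the image of a finitely supported configuration is again finitely supported, which holds because $J$ is finite. Consequently every iterate $T_{n,c}^t x_o$ is finitely supported, and the polynomial encoding is valid at every step.

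Since $P_{x_o} = 1$, induction on $t$ gives
\begin{align*}
P_{T_{n,c}^t x_o} = c^t \Bigl( \sum_{j=1}^J \vb*{X}^{-\vb*{r}_j} \Bigr)^t .
\end{align*}
The base case $t=0$ is $P_{x_o}=1$, and the inductive step is one more application of the displayed operator identity.

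We then expand the power by the multinomial theorem, which holds over any commutative ring and in particular over $\mathbb{Z}/n\mathbb{Z}$:
\begin{align*}
\Bigl( \sum_{j=1}^J \vb*{X}^{-\vb*{r}_j} \Bigr)^t = \sum_{t_1+\cdots+t_J=t} \frac{t!}{t_1!\cdots t_J!}\, \vb*{X}^{-\sum_j t_j \vb*{r}_j}.
\end{align*}
The state at site $\vb*{i}$ is the coefficient of $\vb*{X}^{\vb*{i}}$ in $P_{T_{n,c}^t x_o}$. That coefficient comes from exactly the tuples with $\sum_j t_j = t$ and $\sum_j t_j \vb*{r}_j = -\vb*{i}$, each multiplied by $c^t$, which is \eqref{eq:coef00}.

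The only point needing attention is bookkeeping of signs and distinct shift vectors. If some $\vb*{r}_j$ coincide, the formula still holds because the sum runs over index tuples rather than over distinct monomials, so repeated shifts simply contribute separate terms. The sign convention $-\vb*{i}$ comes from the shift $x_{\vb*{i}+\vb*{r}_j}$, which corresponds to multiplication by $\vb*{X}^{-\vb*{r}_j}$, and it has to be tracked consistently through the substitution above. Beyond this, the argument is a direct generalization of Martin et al.~\cite{martin1984} from one dimension to $D$ dimensions with uniform weight $c$.
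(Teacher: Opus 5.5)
Your proof is correct and follows the same route as the paper: write the $t$-step evolution as $c^t$ times the $t$-th power of a sum of monomials, expand with the multinomial theorem, and read off the coefficient at site $\vb*{i}$. The paper uses monomials $\vb*{X}^{\vb*{r}_j}$ and extracts the coefficient of $\vb*{X}^{-\vb*{i}}$, which is your convention after $X_d \mapsto X_d^{-1}$; your operator identity $P_{T_{n,c}x} = c(\sum_j \vb*{X}^{-\vb*{r}_j})P_x$ and the induction on $t$ also supply the step that the paper only asserts when it says the state ``corresponds to'' that coefficient.
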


\begin{proof}
For $D \in {\mathbb Z}_{\geq 1}$ and $\vb*{r}_j := (r_{j 1}, r_{j 2}, \ldots, r_{j D}) \in {{\mathbb Z}^D}$ for $1 \leq j \leq J$, applying the multinomial theorem to $X_1, X_2, \ldots, X_D$, we obtain
\begin{align*}
& (c X_1^{r_{1 1}} X_2^{r_{1 2}} \cdots X_D^{r_{1 D}} + c X_1^{r_{2 1}} X_2^{r_{2 2}} \cdots X_D^{r_{2 D}} + \cdots + c X_1^{r_{J 1}} X_2^{r_{J 2}} \cdots X_D^{r_{J D}} )^t\\
&= c^t \left( \sum_{j=1}^J \prod_{d=1}^D X_d^{r_{j d}} \right)^t\\
&= c^t \sum_{\sum_{j=1}^J t_j = t} \frac{t!}{t_1! t_2! \ldots t_J!} \left( \prod_{d=1}^D X_d^{r_{1 d}} \right)^{t_1} \left( \prod_{d=1}^D X_d^{r_{2 d}} \right)^{t_2} \cdots \left( \prod_{d=1}^D X_d^{r_{D d}} \right)^{t_D}\\
&= \sum_{\sum_{j=1}^J t_j = t} \frac{t!}{t_1! t_2! \ldots t_J!} c^t \left( X_1^{\sum_{j=1}^J t_j r_{j 1}} X_2^{\sum_{j=1}^J t_j r_{j 2}} \cdots X_D^{\sum_{j=1}^J t_j r_{j D}} \right).
\end{align*}
The state at site $\vb*{i} := (i_1, i_2, \ldots, i_D) \in \mathbb{Z}^D$ at time step $t$ corresponds to the coefficient of the term $\prod_{d=1}^D X_d^{-i_d}$ in the above expansion (noting the definition of the LCA uses the state at $\vb*{i} + \vb*{r}_j$). Therefore, summing over all sets of non-negative integers $(t_1, t_2, \dots, t_J)$ satisfying $\sum_{j=1}^J t_j = t$ and $\sum_{j=1}^J t_j \vb*{r}_j = -\vb*{i}$ yields Equation~\eqref{eq:coef00} modulo $n$.
\end{proof}

In Examples~\ref{exm:coef01} and \ref{exm:coef02}, we demonstrate that Lemma~\ref{lem:coef00} holds for specific instances.

\begin{exm}
\label{exm:coef01}
As an application of Lemma~\ref{lem:coef00}, we consider the case where $D=1$, $J=3$, with neighborhood offsets $r_1=-1$, $r_2=1$, and $r_3=2$.
Here, we consider an LCA-UW defined by $(T_{n, c} x)_i = c (x_{i-1} + x_{i+1} + x_{i+2}) \pmod n$. 
In this setting, the condition $\sum_{j=1}^3 t_j r_j = -i$ in Equation~\eqref{eq:coef00} becomes $-t_1 + t_2 + 2t_3 = -i$, or equivalently, $t_1 - t_2 - 2t_3 = i$. 
Starting from the SSS $x_o$, the state at time $t$ is
\begin{align}
\label{eq:coef01}
(T_{n, c}^t x_o)_i &= 
\sum_{\substack{t_1 + t_2 + t_3 = t\\t_1 - t_2 - 2 t_3 = i}} \frac{t!}{{t_1}! {t_2}! {t_3}!} c^t \pmod n.
\end{align}
Calculating the values of each cell up to $t=3$ based on Equation~\eqref{eq:coef00} yields Table~\ref{tab:coef01}. 
It can be confirmed that the time evolution shown in Table~\ref{tab:st_m01} precisely follows the orbit of the LCA-UW.
\begin{table}[H]
\caption{Values of each site $i$ for the orbit of the LCA-UW $(T_{n, c} x)_i = c (x_{i-1} + x_{i+1} + x_{i+2}) \pmod n$ from the SSS $x_o$ at time steps $t=0, 1, 2, 3$.}
\label{tab:coef01}
\centering
\vspace{1mm}
\begin{tabular}{c | c | c | c | l}
$t$ & $(t_1, t_2, t_3)$ & $i$ & $\frac{t!}{{t_1}! {t_2}! {t_3}!} c^t$ & $(T_{n, c}^t x_o)_i$\\ \hline \hline
$0$ & $(0, 0, 0)$ & $0$ & $1$ & $1$\\ \hline
& $(0, 0, 1)$ & $-2$ & $c$ & $c \pmod n$\\
$1$ & $(0, 1, 0)$ & $-1$ & $c$ & $c \pmod n$\\ 
& $(1, 0, 0)$ & $1$ & $c$ & $c \pmod n$\\ \hline
& $(0, 0, 2)$ & $-4$ & $c^2$ & $c^2 \pmod n$\\
& $(0, 1, 1)$ & $-3$ & $2 c^2$ & $2 c^2 \pmod n$\\ 
$2$ & $(0, 2, 0)$ & $-2$ & $c^2$ & $c^2 \pmod n$\\
& $(1, 0, 1)$ & $-1$ & $2 c^2$ & $2 c^2 \pmod n$\\
& $(1, 1, 0)$ & $0$ & $2 c^2$ & $2 c^2 \pmod n$\\ 
& $(2, 0, 0)$ & $2$ & $c^2$ & $c^2 \pmod n$\\ \hline
& $(0, 0, 3)$ & $-6$ & $c^3$ & $c^3 \pmod n$\\
& $(0, 1, 2)$ & $-5$ & $3 c^3$ & $3 c^3 \pmod n$\\ 
& $(0, 2, 1)$ & $-4$ & $3 c^3$ & $3 c^3 \pmod n$\\
& $(0, 3, 0)$ & $-3$ & $c^3$ & $\ast$\\
$3$ & $(1, 0, 2)$ & $-3$ & $3 c^3$ & $4 c^3 \pmod n$ {\small (\textit{including values with $\ast$})}\\ 
& $(1, 1, 1)$ & $-2$ & $6 c^3$ & $6 c^3 \pmod n$\\ 
& $(1, 2, 0)$ & $-1$ & $3 c^3$ & $3 c^3 \pmod n$\\
& $(2, 0, 1)$ & $0$ & $3 c^3$ & $3 c^3 \pmod n$\\ 
& $(2, 1, 0)$ & $1$ & $3 c^3$ & $3 c^3 \pmod n$\\
& $(3, 0, 0)$ & $3$ & $c^3$ & $c^3 \pmod n$\\ \hline
\end{tabular}
\end{table}
\begin{table}[H]
\caption{Time evolution of the LCA-UW $(T_{n, c} x)_i = c (x_{i-1} + x_{i+1} + x_{i+2}) \pmod n$ from the SSS $x_o$. 
Note that values in the table are taken modulo $n$, and empty cells represent a value of $0$.}
\label{tab:st_m01}
\centering
\vspace{1mm}
\begin{tabular}{c | c c c c c c c c c c }
\diagbox[height=1.2\line, width=1cm]{$t$}{$i$} & $-6$ & $-5$ & $-4$ & $-3$ & $-2$ & $-1$ & $0$ & $1$ & $2$ & $3$ \\ \hline
$0$ &&&&&&& $1$ &&& \\
$1$ &&&&& $c$ & $c$ & & $c$ && \\
$2$ &&& $c^2$ & $2 c^2$ & $c^2$ & $2 c^2$ & $2 c^2$ & & $c^2$ & \\
$3$ & $c^3$ & $3 c^3$ & $3 c^3$ & $4 c^3$ & $6 c^3$ & $3 c^3$ & $3 c^3$ & $3 c^3$ && $c^3$ \\
\end{tabular} 
\end{table}
\end{exm}

The following is an example of a two-dimensional LCA-UW.

\begin{exm}
\label{exm:coef02}
We consider another example where $D=2$, $J=3$, with neighborhood offsets ${\vb*{r}}_1 = (0, -1)$, ${\vb*{r}}_2 = (1, 0)$, ${\vb*{r}}_3 = (-1, -2)$.
The LCA-UW is defined by $(T_{n, c} x)_{\vb*{i}} = c (x_{\vb*{i}+\vb*{r}_1} + x_{\vb*{i}+\vb*{r}_2} + x_{\vb*{i}+\vb*{r}_3}) \pmod n$.
The condition $\sum_{j=1}^3 t_j r_j = -i$ in Equation~\eqref{eq:coef00} becomes $(t_2 -t_3, -t_1 - 2 t_3) = - \vb*{i}$, or equivalently, $(- t_2 + t_3, t_1 + 2 t_3) = \vb*{i}$.
Starting from the SSS $x_o$, the state at time $t$ is
\begin{align}
\label{eq:coef02}
(T_{n, c}^t x_o)_{\vb*{i}} &= 
\sum_{\substack{t_1 + t_2 + t_3 = t\\(- t_2 + t_3, t_1 + 2 t_3) = \vb*{i}}} \frac{t!}{{t_1}! {t_2}! {t_3}!} c^t \pmod n.
\end{align}
Calculating the values of each cell up to $t=3$ based on Equation~\eqref{eq:coef00} yields Table~\ref{tab:coef02}. 
It can be confirmed that the time evolution shown in Table~\ref{tab:st_m02} precisely follows the orbit of the LCA-UW.
\begin{table}[H]
\caption{Values of each site $\vb*{i}$ for the orbit of the LCA-UW $(T_{n, c} x)_{\vb*{i}} = c (x_{\vb*{i}+(0, -1)} + x_{\vb*{i}+(1,0)} + x_{\vb*{i}+(-1, -2)}) \pmod n$ from the SSS $x_o$ at time steps $t=0, 1, 2, 3$.}
\label{tab:coef02}
\centering
\vspace{1mm}
\begin{tabular}{c | c | c | c | l}
$t$ & $(t_1, t_2, t_3)$ & $\vb*{i}$ & $\frac{t!}{{t_1}! {t_2}! {t_3}!} c^t$ & $(T_{n, c}^t x_o)_{\vb*{i}}$\\ \hline \hline
$0$ & $(0, 0, 0)$ & $(0, 0)$ & $1$ & $1$\\ \hline
& $(0, 0, 1)$ & $(1, 2)$ & $c$ & $c \pmod n$\\
$1$ & $(0, 1, 0)$ & $(-1, 0)$ & $c$ & $c \pmod n$\\ 
& $(1, 0, 0)$ & $(0, 1)$ & $c$ & $c \pmod n$\\ \hline
& $(0, 0, 2)$ & $(2, 4)$ & $c^2$ & $c^2 \pmod n$\\
& $(0, 1, 1)$ & $(0, 2)$ & $2 c^2$ & $\ast 1$\\ 
$2$ & $(0, 2, 0)$ & $(-2, 0)$ & $c^2$ & $c^2 \pmod n$\\
& $(1, 0, 1)$ & $(1, 3)$ & $2 c^2$ & $2 c^2 \pmod n$\\
& $(1, 1, 0)$ & $(-1, 1)$ & $2 c^2$ & $2 c^2 \pmod n$\\ 
& $(2, 0, 0)$ & $(0, 2)$ & $c^2$ & $3 c^2 \pmod n$ {\small (\textit{including values with $\ast 1$})}\\  \hline
& $(0, 0, 3)$ & $(3, 6)$ & $c^3$ & $c^3 \pmod n$\\
& $(0, 1, 2)$ & $(1, 4)$ & $3 c^3$ & $\ast 2$\\ 
& $(0, 2, 1)$ & $(-1, 2)$ & $3 c^3$ & $\ast 3$\\
& $(0, 3, 0)$ & $(-3, 0)$ & $c^3$ & $c^3 \pmod n$\\
$3$ & $(1, 0, 2)$ & $(2, 5)$ & $3 c^3$ & $3 c^3 \pmod n$\\ 
& $(1, 1, 1)$ & $(0, 3)$ & $6 c^3$ & $\ast 4$\\ 
& $(1, 2, 0)$ & $(-2, 1)$ & $3 c^3$ & $3 c^3 \pmod n$\\
& $(2, 0, 1)$ & $(1, 4)$ & $3 c^3$ & $6 c^3 \pmod n$ {\small (\textit{including values with $\ast 2$})}\\ 
& $(2, 1, 0)$ & $(-1, 2)$ & $3 c^3$ & $6 c^3 \pmod n$ {\small (\textit{including values with $\ast 3$})}\\ 
& $(3, 0, 0)$ & $(0, 3)$ & $c^3$ & $7 c^3 \pmod n$ {\small (\textit{including values with $\ast 4$})}\\ \hline
\end{tabular}
\end{table}
\begin{table}[H]
\centering
\caption{Time evolution of the LCA-UW $(T_{n, c} x)_{\vb*{i}} = c (x_{\vb*{i}+(0, -1)} + x_{\vb*{i}+(1,0)} + x_{\vb*{i}+(-1, -2)}) \pmod n$ from the SSS $x_o$. 
Note that values in the table are taken modulo $n$, and empty cells represent a value of $0$.}
\label{tab:st_m02}
\begin{minipage}{0.45\textwidth}
\centering
\vspace{1mm}
\small{
$t=0$\\
\begin{tabular}{c | p{2pt} p{2pt} p{2pt} p{2pt} p{2pt} p{2pt} p{2pt}}
\diagbox[height=1.2\line, width=1cm]{$i_2$}{$i_1$} & $3$ & $2$ & $1$ & $0$ & $-1$ & $-2$ & $-3$\\ \hline
$6$ &&&&&&& \\
$5$ &&&&&&& \\
$4$ &&&&&&& \\
$3$ &&&&&&& \\
$2$ &&&&&&& \\
$1$ &&&&&&& \\
$0$ &&&& $1$ &&& \\
\end{tabular}\\
$t=2$\\
\begin{tabular}{c | p{2pt} p{2pt} p{2pt} p{2pt} p{2pt} p{2pt} p{2pt}}
\diagbox[height=1.2\line, width=1cm]{$i_2$}{$i_1$} & $3$ & $2$ & $1$ & $0$ & $-1$ & $-2$ & $-3$\\ \hline
$6$ &&&&&&& \\
$5$ &&&&&&& \\
$4$ && $c^2$ &&&&& \\
$3$ &&& $2 c^2$ &&&& \\
$2$ &&&& $3 c^2$ &&& \\
$1$ &&&&& $2 c^2$ && \\
$0$ &&&&&& $c^2$ & \\
\end{tabular}
}
\end{minipage}
\begin{minipage}{0.45\textwidth}
\centering
\vspace{1mm}
\small{
$t=1$\\
\begin{tabular}{c | p{2pt} p{2pt} p{2pt} p{2pt} p{2pt} p{2pt} p{2pt}}
\diagbox[height=1.2\line, width=1cm]{$i_2$}{$i_1$} & $3$ & $2$ & $1$ & $0$ & $-1$ & $-2$ & $-3$\\ \hline
$6$ &&&&&&& \\
$5$ &&&&&&& \\
$4$ &&&&&&& \\
$3$ &&&&&&& \\
$2$ &&& $c$ &&&& \\
$1$ &&&& $c$ &&& \\
$0$ &&&&& $c$ && \\
\end{tabular}\\
$t=3$\\
\begin{tabular}{c | p{2pt} p{2pt} p{2pt} p{2pt} p{2pt} p{2pt} p{2pt}}
\diagbox[height=1.2\line, width=1cm]{$i_2$}{$i_1$} & $3$ & $2$ & $1$ & $0$ & $-1$ & $-2$ & $-3$\\ \hline
$6$ & $c^3$ &&&&&& \\
$5$ && $3 c^3$ &&&&& \\
$4$ &&& $6 c^3$ &&&& \\
$3$ &&&& $7 c^3$ &&& \\
$2$ &&&&& $6 c^3$ && \\
$1$ &&&&&& $3 c^3$ & \\
$0$ &&&&&&& $c^3$  \\
\end{tabular}
}
\end{minipage}
\end{table}
\end{exm}

\section{Main results}
\label{sec:main}

In this section, we present the main results regarding a fundamental reduction principle. 
We demonstrate that for any coefficient $c$ of an LCA-UW satisfying $\gcd(c, n) > 1$, the generated infinite binary orbit eventually coincides with that of an LCA-UW with a reduced state size and a unit coefficient. 
Specifically, we show that the entire binary pattern of an LCA-UW can be fully characterized by examining only the $c=1$ cases across all $n \in \mathbb{Z}_{\geq 2}$.

\begin{dfn}
\label{dfn:init_conf01}
An orbit $\{T^t_{n, c} x_o\}_{t \geq 0}$ of an LCA-UW $T_{n, c}$ starting from the SSS $x_o$ \textbf{grows infinitely} if, for any $t \in {\mathbb Z}_{\geq 0}$, there exists some $\vb*{i} \in {\mathbb Z}^D$ such that $(T_{n, c}^t x_o)_{\vb*{i}} \neq 0$. 
Otherwise, the orbit \textbf{eventually vanishes} (or \textbf{does not grow infinitely}) if there exists a time step $\hat{t} \in {\mathbb Z}_{\geq 0}$ such that $(T_{n, c}^t x_o)_{\vb*{i}} = 0$ for all ${\vb*{i}} \in {\mathbb Z}^D$ whenever $t > \hat{t}$.
\end{dfn}

\begin{lem}
\label{lem:INF01}
Let the prime factorization of the number of states be $n = \prod_{j=1}^m p_j^{k_j}$, and the radical of $n$ be $\mathrm{rad}(n) = \prod_{j=1}^m p_j$. 
An orbit $\{T^t_{n, c} x_o\}_{t \geq 0}$ of an LCA-UW $T_{n, c}$ is classified into the following two cases based on the coefficient $c$. 
\begin{enumerate}
\item If $c = 0 \pmod{\mathrm{rad}(n)}$, the orbit eventually vanishes. 
\item If $c \neq 0 \pmod{\mathrm{rad}(n)}$, the orbit grows infinitely.
\end{enumerate}
\end{lem}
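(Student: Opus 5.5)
The plan is to work with the generating-function form that underlies Lemma~\ref{lem:coef00}. Write $f(X) := \sum_{j=1}^J \prod_{d=1}^D X_d^{r_{jd}}$, a Laurent polynomial in $X_1,\ldots,X_D$. The proof of Lemma~\ref{lem:coef00} shows that $T^t_{n,c}x_o$ is encoded by the coefficients of $c^t f(X)^t$, reduced modulo $n$: the state at site $\vb*{i}$ is the coefficient of $\prod_d X_d^{-i_d}$. So the orbit vanishes at time $t$ exactly when $c^t f(X)^t \equiv 0$ in $(\mathbb{Z}/n\mathbb{Z})[X_1^{\pm1},\ldots,X_D^{\pm1}]$. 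I will treat the two cases separately.

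\emph{Case 1: $c \equiv 0 \pmod{\mathrm{rad}(n)}$.} Then every $p_j$ divides $c$, so $p_j^{t}$ divides $c^t$ for each $j$. Take $t \geq K_n$. Then $p_j^{k_j}$ divides $c^t$ for all $j$, hence $n \mid c^t$. By Equation~\eqref{eq:coef00}, every state $(T^t_{n,c}x_o)_{\vb*{i}}$ is an integer multiple of $c^t$, so it is $0 \pmod n$ for all $\vb*{i}$. Hence the orbit vanishes for all $t > \hat t := K_n - 1$.

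\emph{Case 2: $c \not\equiv 0 \pmod{\mathrm{rad}(n)}$.} Here some prime $p = p_j$ does not divide $c$. Since $p \mid n$, any site whose state is nonzero modulo $p$ is also nonzero modulo $n$. So it suffices to show that for every $t$ some coefficient of $c^t f^t$ is nonzero modulo $p$.

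Reduce modulo $p$. Since $c$ is a unit in $\mathbb{F}_p$, so is $c^t$, and it is enough to show $f^t \neq 0$ in $\mathbb{F}_p[X_1^{\pm1},\ldots,X_D^{\pm1}]$. This Laurent polynomial ring is an integral domain, being a localization of a polynomial ring over a field. Therefore $f^t \neq 0$ for all $t$ as soon as $f \not\equiv 0 \pmod p$.

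I also plan to give a more elementary alternative that avoids ring theory. Choose a linear functional $\lambda$ that is uniquely maximized at one offset $\vb*{r}_{j_0}$ among the distinct offsets. At time $t$, consider the extreme site $-\vb*{i} = t\vb*{r}_{j_0}$. The only contributing tuple there is $t_{j_0}=t$, so its state is $c^t$, which is nonzero modulo $p$. This is exactly the behaviour visible at the corners of Tables~\ref{tab:st_m01} and \ref{tab:st_m02}.

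The main obstacle is the requirement $f \not\equiv 0 \pmod p$. This holds automatically when the offsets $\vb*{r}_j$ are pairwise distinct, since then every coefficient of $f$ equals $1$, and I will state this as the standing assumption. If repeated offsets were allowed, the coefficient of $f$ at an offset would be its multiplicity. Case~2 would then need the extra condition that some multiplicity is not divisible by $p$, and the extreme-site argument would have to select a vertex of the Newton polytope whose multiplicity is prime to $p$. I will remark on this explicitly rather than handle it in full generality.
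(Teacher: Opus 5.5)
Your proof is correct, and Case~1 matches the paper's argument. In Case~2 the paper uses exactly your ``elementary alternative.'' It takes a vertex $\vb*{r}_M$ of the convex hull of the offsets and observes that the site $-t\vb*{r}_M$ is reached only by the tuple $t_M=t$, so that site's state is $c^t$. It then notes that $c^t\not\equiv 0 \pmod n$ because some $p_j\nmid c$.

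Your primary route is different. You reduce modulo a prime $p\mid n$ with $p\nmid c$ and use that $\mathbb{F}_p[X_1^{\pm1},\ldots,X_D^{\pm1}]$ is an integral domain. The trade-off is as follows.
\begin{itemize}
\item The paper's argument is constructive: it locates an explicit nonzero cell at the tip of the light cone, with exact value $c^t$.
\item Yours is non-constructive but needs no convex geometry, and it generalizes verbatim. For a general LCA $T_{n,\{c_j\}}$ with local polynomial $g=\sum_j c_j X^{\vb*{r}_j}$ (repeated offsets merged), it shows the SSS orbit grows infinitely if and only if $g\not\equiv 0 \pmod p$ for some prime $p\mid n$. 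The converse direction is your Case~1 argument applied to $g=\mathrm{rad}(n)h$.
\end{itemize}
If you keep the extreme-site version for repeated offsets, take a vertex of the Newton polytope of $f \bmod p$ rather than of $f$. Its multiplicity is then automatically prime to $p$, whereas every vertex of the Newton polytope of $f$ itself may have multiplicity divisible by $p$.

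Your explicit distinct-offsets assumption is well placed. The paper's proof tacitly needs it: the claim that $t_M=t$ is the unique contributing combination fails if $\vb*{r}_M$ is repeated. The lemma is in fact false without it. For example, $n=2$, $J=2$, $\vb*{r}_1=\vb*{r}_2$, $c=1$ gives $T_{2,1}x=2x=0$, even though $c\not\equiv 0 \pmod{\mathrm{rad}(2)}$.
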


\begin{proof}
\begin{enumerate}
\item Suppose $c = 0 \pmod{\mathrm{rad}(n)}$. 
This implies that $c$ is a multiple of every prime factor $p_j$ of $n$. 
Let $K_n = \max_{1 \leq j \leq m} \{ k_j \}$. 
For any $t \geq K_n$, the term $c^t$ in Lemma~\ref{lem:coef00} satisfies $c^t = 0 \pmod{p_j^{k_j}}$ for all $1 \leq j \leq m$. 
By the Chinese remainder theorem, it follows that $c^t = 0 \pmod n$ for all $t \geq K_n$. 
Since every state $(T_{n, c}^t x_o)_{\vb*{i}}$ is a linear combination of terms each containing $c^t$ as a factor, the state of every cell becomes $0 \pmod n$ for all $t \geq K_n$. 
Thus, the orbit eventually vanishes.
\item Suppose $c \neq 0 \pmod{\mathrm{rad}(n)}$.
Then there exists at least one prime factor $p_j$ of $n$ such that $c$ is not divisible by $p_j$. 
By the property of prime factorization, $c^t$ is also not divisible by $p_j$ for any $t \geq 0$. 
Therefore, $c^t \neq 0 \pmod{p_j^{k_j}}$, which implies $c^t \neq 0 \pmod n$.
To show infinite growth, we identify at least one site $\vb*{i}$ where the state is non-zero for each $t$. 
Let $\vb*{r}_M$ be an extreme point (a vertex) of the convex hull of the neighborhood set $\{\vb*{r}_1, \dots, \vb*{r}_J\}$.
For the site $\vb*{i} = -t \vb*{r}_M$, the condition $\sum_{j=1}^J t_j \vb*{r}_j = -t \vb*{r}_M$ with $\sum_{j=1}^J t_j = t$ is satisfied only by the unique combination $t_M = t$ and $t_j = 0$ for all $j \neq M$. 
According to Lemma~\ref{lem:coef00}, the state at this site is given by
\begin{align}
(T_{n, c}^t x_o)_{- t \vb*{r}_M} &=  \frac{t!}{t!} c^t  = c^t \pmod n.
\end{align}
Since $c^t \neq 0 \pmod n$, the state at site $-t \vb*{r}_M$ remains non-zero for all $t \geq 0$. 
Thus, the orbit grows infinitely.
\end{enumerate}
\end{proof}

We define the operator $\mathcal{B}$ below, which applies a characteristic function that maps any positive state to $1$ and zero to $0$.

\begin{dfn}
\label{dfn:Bin}
For an orbit $\{T^t_{n, c} x_o\}_{t \geq 0}$ of an LCA-UW $T_{n, c}$ starting from $x_o$, we define the \textbf{binary projection operator} $\mathcal{B}: {({\mathbb Z} / n {\mathbb Z})}^{{\mathbb Z}^D \times {\mathbb Z}_{\geq 0}} \to {({\mathbb Z} / 2 {\mathbb Z})}^{{\mathbb Z}^D \times {\mathbb Z}_{\geq 0}}$, which maps each cell state to a binary value as follows
\begin{align}
\left(\mathcal{B} (\{T_{n, c}^t x_o \}_{t \geq 0}) \right)_{\vb*{i}, t} &=
\begin{cases} 
1 & \text{if } (T_{n, c}^t x_o)_{\vb*{i}} \neq 0,\\
0 & \text{if } (T_{n, c}^t x_o)_{\vb*{i}} = 0,
\end{cases}
\end{align}
for $\vb*{i} \in {\mathbb Z}^D$ and $t \in {\mathbb Z}_{\geq 0}$.
\end{dfn}

This operator allows us to focus on the spatio-temporal support of the orbit, disregarding the specific values in ${\mathbb Z} / n {\mathbb Z}$.
We refer to the orbit formed by the set of positive cells and zero cells as a \textbf{binary orbit}. 
Furthermore, a binary orbit that grows infinitely is called an \textbf{infinite binary orbit}.
For $K \geq 0$, when considering the infinite binary orbit $\mathcal{B}(\{T_{n, c}^t x_0\}_{t \geq 0})$ restricted to time steps $t \geq K$ for all sites $\vb*{i} \in {\mathbb Z}^D$, we denote it by $\mathcal{B} (\{T_{n, c}^t x_o \}_{t \geq K})$.

Hereafter, we shall exclusively consider infinite binary orbits.
Figure~\ref{fig:nc1} shows the classification of LCA-UW orbits $\{T^t_{n, c} x_o\}_{t \geq 0}$, where the horizontal axis represents the number of states $n$ from $2$ to $32$ and the vertical axis represents the coefficient $c$ from $1$ to $n-1$. 
In this table, the black-filled cells indicate orbits that eventually vanish, as shown in Lemma~\ref{lem:INF01}. 
We focus on the orbits represented by the white (unfilled) cells.
The values associated with these white cells will be discussed following Proposition~\ref{prop:varA}.

\begin{figure}[htb]
\centering
\includegraphics[width=1.\columnwidth]{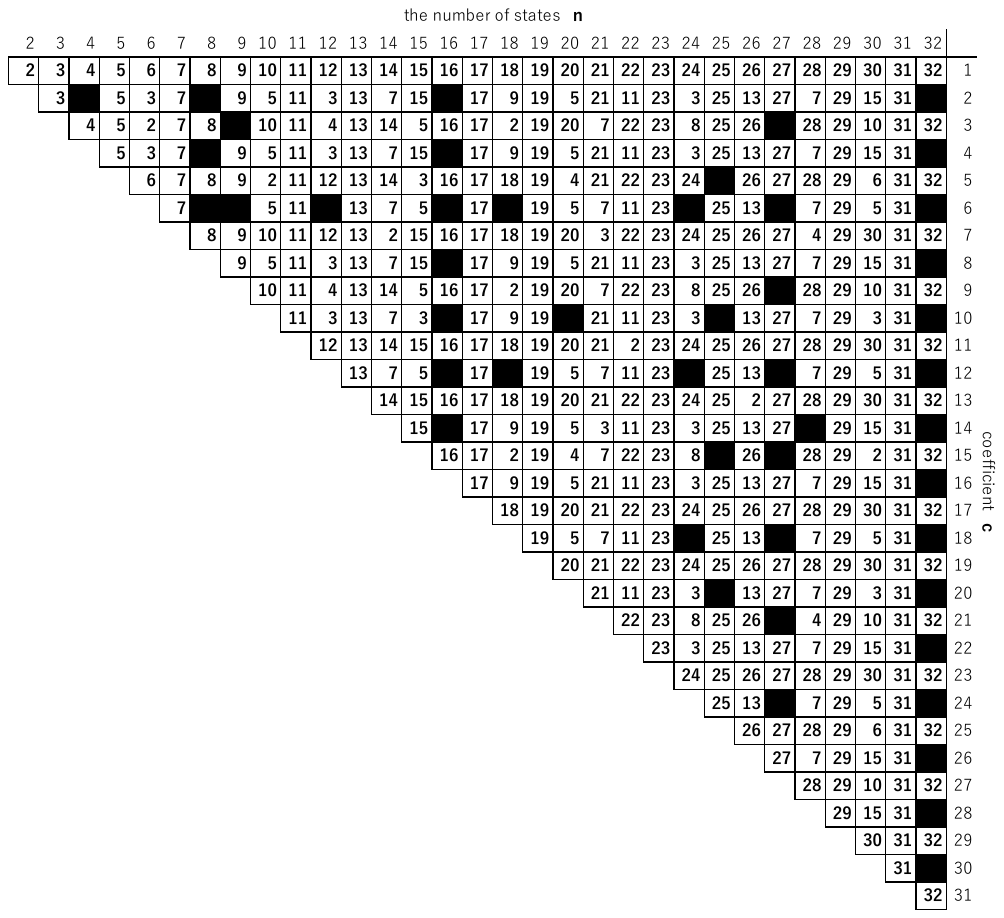}
\caption{Classification table of orbits for LCA-UW $T_{n, c}$ starting from the SSS $x_o$ ($n \leq 32$).}
\label{fig:nc1}
\end{figure}

For a fixed $n$, the orbits of $T_{n, c}$ may coincide for distinct values of $c$. 
The following lemma characterizes this equivalence.

\begin{lem}
\label{lem:gr}
Let the prime factorization of the number of states be $n = \prod_{j=1}^m p_j^{k_j}$. 
Consider a coefficient $c = \prod_{j \in I} p_j^{k_j}$ for a proper subset $I \subsetneq \{1, 2, \ldots, m\}$.
Let $\bar{I} := \{1, 2, \ldots, m\} \setminus I$ be the complement of $I$. 
Suppose $\hat{c} \in {\mathbb Z} / n {\mathbb Z}$ satisfies
\begin{align}
\hat{c} \in \left( \bigcap_{j \in I} p_j {\mathbb Z} / n {\mathbb Z} \right) \setminus \left( \bigcup_{j \in \bar{I}} p_j {\mathbb Z} / n {\mathbb Z} \right).
\end{align}
Then, for $K_c = \max_{j \in I} \{k_j\}$, the orbits of the LCA-UWs $T_{n, c}$ and $T_{n, \hat{c}}$ satisfy
\begin{align}
\mathcal{B} (\{T_{n, \hat{c}}^t x_o \}_{t \geq K_c}) = \mathcal{B} (\{T_{n, c}^t x_o \}_{t \geq K_c}).
\end{align}
\end{lem}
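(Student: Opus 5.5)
The plan is to compare the two orbits cell by cell using the closed form of Lemma~\ref{lem:coef00}, and to reduce the comparison modulo each prime power via the Chinese remainder theorem. For a fixed $t$ and site $\vb*{i}$, write
\begin{align*}
M_{\vb*{i},t} := \sum_{\substack{\sum_{j} t_j = t\\ \sum_{j} t_j \vb*{r}_j = -\vb*{i}}} \frac{t!}{t_1! \cdots t_J!} \in \mathbb{Z}_{\geq 0}.
\end{align*}
This integer depends only on the neighborhood and not on the coefficient. By Lemma~\ref{lem:coef00} we then have $(T_{n,c}^t x_o)_{\vb*{i}} = c^t M_{\vb*{i},t} \pmod n$ and $(T_{n,\hat c}^t x_o)_{\vb*{i}} = \hat c^{\,t} M_{\vb*{i},t} \pmod n$. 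So it suffices to show that, for every $t \ge K_c$ and every integer $M$,
\begin{align*}
c^t M \equiv 0 \pmod n \iff \hat c^{\,t} M \equiv 0 \pmod n.
\end{align*}
By the Chinese remainder theorem, this is in turn equivalent to proving the same equivalence modulo $p_j^{k_j}$ for each $j = 1, \ldots, m$ separately.

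For $j \in I$, the hypothesis $\hat c \in p_j \mathbb{Z}/n\mathbb{Z}$ means $p_j \mid \hat c$. This is well defined on residues because $p_j \mid n$. Also $p_j^{k_j} \mid c$ by the definition of $c$. Hence for $t \ge K_c \ge k_j$ both $c^t$ and $\hat c^{\,t}$ are divisible by $p_j^{t}$, and therefore by $p_j^{k_j}$. So both sides vanish modulo $p_j^{k_j}$ regardless of $M$, and the equivalence holds trivially in these components. This is the only place the threshold $K_c$ is used.

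For $j \in \bar I$, $c$ is a product of primes different from $p_j$, so $\gcd(c, p_j) = 1$. The hypothesis $\hat c \notin p_j\mathbb{Z}/n\mathbb{Z}$ gives $p_j \nmid \hat c$. Thus both $c^t$ and $\hat c^{\,t}$ are units in $\mathbb{Z}/p_j^{k_j}\mathbb{Z}$ for every $t \ge 0$. Multiplying by a unit preserves vanishing, so we get $c^t M \equiv 0 \iff M \equiv 0 \iff \hat c^{\,t} M \equiv 0 \pmod{p_j^{k_j}}$.

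Combining the two cases over all $j$ gives the cellwise equivalence for all $t \ge K_c$. By Definition~\ref{dfn:Bin}, this is exactly the equality of the restricted binary orbits. Since $I$ is proper, $\bar I \neq \emptyset$, so both orbits also grow infinitely by Lemma~\ref{lem:INF01}, consistent with the standing restriction to infinite binary orbits.

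There is no serious obstacle. The main points needing care are the following:
\begin{itemize}
\item Factoring $c^t$ out of every term of Lemma~\ref{lem:coef00}, so that both states share the same integer multiplier $M_{\vb*{i},t}$.
\item Interpreting membership in the ideals $p_j\mathbb{Z}/n\mathbb{Z}$ as divisibility of representatives.
\item The degenerate case $I = \emptyset$. There $c = 1$, $K_c$ is taken to be $0$, and $\hat c$ is a unit modulo $n$, so the argument reduces to the second case alone.
\end{itemize}
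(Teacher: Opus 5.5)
Your proof is correct and takes essentially the same route as the paper. Both arguments factor the common multinomial count out of Lemma~\ref{lem:coef00} and note that for $t \ge K_c$ both $c^t$ and $\hat c^{\,t}$ vanish modulo $p_j^{k_j}$ for $j \in I$, while both are units modulo $p_j^{k_j}$ for $j \in \bar I$. They conclude that a cell vanishes in either orbit exactly when that count is $\equiv 0 \pmod{n/c}$, and both treat $I=\emptyset$ (where $c=1$ and $K_c=0$) separately. Your explicit prime-by-prime Chinese-remainder step simply spells out the paper's final equivalence.
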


\begin{proof}
First, consider the case where $I = \emptyset$. 
In this case, $c \in ({\mathbb Z} / n {\mathbb Z}) \setminus \left( \bigcup_{j =1}^ m p_j {\mathbb Z} / n {\mathbb Z} \right) = ({\mathbb Z} / n {\mathbb Z})^{\times}$, which implies $K_c = 0$. 
From Lemma~\ref{lem:coef00}, for any $c \in ({\mathbb Z} / n {\mathbb Z})^{\times}$, there exists $A_{{\vb*{i}}, t} \in {\mathbb Z}_{\geq 0}$ for each time step $t$ and site $\vb*{i}$ such that
\begin{align}
(T_{n, c}^t x_o)_{\vb*{i}} = c^t A_{{\vb*{i}}, t} \pmod n.
\end{align}
Since $c \in ({\mathbb Z} / n {\mathbb Z})^{\times}$, its power $c^t$ is also invertible in ${\mathbb Z} / n {\mathbb Z}$ (i.e., $\gcd(c^t, n) = 1$), for any $t \geq 0$. 
Thus, $(T_{n, c}^t x_o)_{\vb*{i}} = 0 \pmod n$ is equivalent to $A_{\vb*{i}, t} = 0 \pmod n$. 
Similarly, $(T_{n, c}^t x_o)_{\vb*{i}}  \neq 0 \pmod n$ is equivalent to $A_{\vb*{i}, t} \neq 0 \pmod n$. 
Since $A_{\vb*{i}, t} = (T_{n, 1}^t x_o)_{\vb*{i}} \pmod n$, it follows that $\mathcal{B} (\{ T_{n, c}^t x_o \}_{t \geq 0}) = \mathcal{B} (\{ T_{n, 1}^t x_o \}_{t \geq 0})$ for all $c \in({\mathbb Z} / n {\mathbb Z})^{\times}$.

Next, we consider the case where $I \neq \emptyset$. 
From Lemma~\ref{lem:coef00}, for a given $c$, we can take $A_{{\vb*{i}}, t} \in {\mathbb Z}_{\geq 0}$ for time step $t$ and site $\vb*{i}$ such that
\begin{align}
(T_{n, c}^t x_o)_{\vb*{i}} &= c^t A_{{\vb*{i}}, t} \pmod n \quad \text{and} \quad 
(T_{n, \hat{c}}^t x_o)_{\vb*{i}} = \hat{c}^t A_{{\vb*{i}}, t} \pmod n.
\end{align}
By definition, $c = \prod_{j \in I} p_j^{k_j}$, thus $c^t = 0 \pmod{p_j^{k_j}}$ for all $j \in I$ and $t \geq 1$. 
For $\hat{c}$, since it belongs to $\bigcap_{j \in I} p_j {\mathbb Z} / n {\mathbb Z}$, it follows that $\hat{c}$ is a multiple of $\prod_{j \in I} p_j$.
Thus, for any $t \geq K_c$, the term $\hat{c}^t$ is a multiple of $\prod_{j \in I} p_j^{k_j}$, which means $\hat{c}^t = 0 \pmod{p_j^{k_j}}$ for all $j \in I$.
Furthermore, both $c$ and $\hat{c}$ are invertible modulo $p_j^{k_j}$ for all $j \in \bar{I}$ because they do not contain $p_j$ as a factor for $j \in \bar{I}$. 
Therefore, for any $t \geq K_c$, whether the state is zero in either orbit is determined solely by whether $A_{\vb*{i}, t}$ is a multiple of the remaining factors. 
Specifically, we have
\begin{align}
(T_{n, c}^t x_o)_{\vb*{i}} = 0 \pmod n & \iff A_{\vb*{i}, t} = 0 \pmod{n/c} \\
& \iff (T_{n, \hat{c}}^t x_o)_{\vb*{i}} = 0 \pmod n
\end{align}
for all $\vb*{i} \in {\mathbb Z}^D$ and $t \geq K_c$.
Therefore, $\mathcal{B} (\{T_{n, \hat{c}}^t x_o \}_{t \geq K_c}) = \mathcal{B} (\{T_{n, c}^t x_o \}_{t \geq K_c})$ holds.
\end{proof}

\begin{rmk}
Regarding Lemma~\ref{lem:gr}, it should be noted that for $t < K_c = \max_{j \in I} \{k_j\}$, the binary orbits $\mathcal{B} (\{T_{n, \hat{c}}^t x_o \}_{t < K_c})$ and $\mathcal{B} (\{T_{n, c}^t x_o \}_{t < K_c})$ may not coincide, as illustrated in Figure~\ref{fig:0001}.
Consider the time evolution of a $1$-dimensional LCA-UW $(T_{n, c} x)_i = c (x_{i-1} + x_{i} + x_{i+1}) \pmod n$ starting from $x_o$. 
Let $n = 48 = 2^4 \cdot 3$. 
If we set $c = 2^4 = 16$, we obtain $K_c = 4$. 
If we take $\hat{c} = 2$, which is a multiple of $p_1=2$ but not of $p_1^4=16$, the binary orbits are not guaranteed to match during the initial steps $t < K_c = 4$. 
Indeed, as shown in Figure~\ref{fig:0001}, the binary orbits of $(a)$ $\{T_{48, 16}^t x_o\}_{t \leq 0}$ and $(b)$ $\{T_{48, 2}^t x_o\}_{t \leq 0}$ differ for $t < 4$, but become identical for all $t \geq 4$.
\end{rmk}

\begin{cor}
\label{cor:gr_n}
For $n$, $c$, and $\hat{c}$ defined in Lemma~\ref{lem:gr}, the binary orbits of the LCA-UWs $T_{n, c}$ and $T_{n, \hat{c}}$ satisfy
\begin{align}
\mathcal{B} (\{T_{n, \hat{c}}^t x_o \}_{t \geq K_n}) = \mathcal{B} (\{T_{n, c}^t x_o \}_{t \geq K_n}),
\end{align}
where $K_n = \max_{1 \leq j \leq m} \{k_j\}$ for $n$.
Note that this equality holds because $K_n \geq K_c$ for all $c$.
\end{cor}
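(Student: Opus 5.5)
The corollary follows directly from Lemma~\ref{lem:gr} by restricting its conclusion to a later time window. Lemma~\ref{lem:gr} gives the equality of binary projections for every time step $t \geq K_c$. Both sides of the corollary are obtained from those of the lemma by discarding the time slices $K_c \leq t < K_n$. It is therefore enough to show that $K_n \geq K_c$ and that restriction to $t \geq K_n$ respects the equality.

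The first step is the inequality $K_c \leq K_n$. Since $c=\prod_{j\in I}p_j^{k_j}$ with $I \subsetneq \{1,\dots,m\}$, we have
\begin{align*}
K_c=\max_{j\in I}\{k_j\} \leq \max_{1\leq j\leq m}\{k_j\}=K_n ,
\end{align*}
because the maximum over a subset cannot exceed the maximum over the whole index set. For the case $I=\emptyset$ we adopt the convention $K_c=0$, which is the one used in the proof of Lemma~\ref{lem:gr}; then $K_c \leq K_n$ trivially, as $K_n\geq 1$.

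The second step is pointwise. For every $t \geq K_n$ we have $t \geq K_c$, so Lemma~\ref{lem:gr} gives
\begin{align*}
\bigl(\mathcal{B}(\{T_{n,\hat c}^t x_o\})\bigr)_{\vb*{i},t}=\bigl(\mathcal{B}(\{T_{n,c}^t x_o\})\bigr)_{\vb*{i},t}
\end{align*}
for all $\vb*{i}\in\mathbb{Z}^D$. This holds because each entry of the binary projection depends only on the single configuration $T^t x_o$ at the site $\vb*{i}$, so agreement of the two orbits on $\{t\geq K_c\}$ implies agreement on the subset $\{t\geq K_n\}$. By the definition of $\mathcal{B}(\{\cdot\}_{t\geq K})$ as the restriction to time steps $t\geq K$, this is exactly the claimed equality.

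There is no real obstacle here. The only point needing care is to state the convention $K_c=0$ for $I=\emptyset$ explicitly, so that the maximum over an empty index set is well defined.
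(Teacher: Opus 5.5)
Your proposal is correct and matches the paper's argument: the corollary is Lemma~\ref{lem:gr} restricted to the later time window $t \geq K_n$, justified by $K_c \leq K_n$. Your explicit treatment of the empty-index convention $K_c = 0$ and of the pointwise nature of $\mathcal{B}$ spells out what the paper's one-line note leaves implicit.
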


For a fixed state size $n$, there could be as many as $n-1$ distinct infinite binary orbits, depending on the choice of the coefficient $c$.
However, as implied by Lemma~\ref{lem:gr} and Corollary~\ref{cor:gr_n}, many of these orbits are identical. 
The following lemma shows that the number of unique orbits is significantly smaller.

\begin{lem}
\label{lem:varA01}
For a fixed number of states $n = \prod_{j=1}^m p_j^{k_j}$, as the coefficient $c$ varies from $1$ to $n-1$, the number of distinct infinite binary orbits $\mathcal{B} (\{T_{n, c}^t x_o\}_{t \geq K_n})$ is at most $2^m-1$ for $K_n = \max_{1 \leq j \leq m} \{k_j\}$.
\end{lem}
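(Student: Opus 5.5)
The plan is to partition the coefficients $c \in \{1, \ldots, n-1\}$ that yield infinite binary orbits according to which prime factors of $n$ divide them, and to show that each class produces a single eventual binary orbit. For each such $c$, define the index set
\begin{align*}
I(c) := \{ j \in \{1, \ldots, m\} \mid p_j \mid c \}.
\end{align*}
By Lemma~\ref{lem:INF01}, the orbit grows infinitely exactly when $c \neq 0 \pmod{\mathrm{rad}(n)}$. This condition is equivalent to $I(c) \subsetneq \{1, \ldots, m\}$. Hence the infinite orbits are indexed by the proper subsets $I$, and there are $2^m - 1$ of them.

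Fix a proper subset $I$ and put $c_I := \prod_{j \in I} p_j^{k_j}$, with the convention $c_\emptyset = 1$. Since $I$ is proper, $c_I$ is a proper divisor of $n$, so $c_I \in \{1, \ldots, n-1\}$ is an admissible coefficient. Let $\hat{c}$ be any coefficient with $I(\hat{c}) = I$. Then $\hat{c}$ is a multiple of $p_j$ for every $j \in I$ and of no $p_j$ with $j \in \bar{I}$. This places $\hat{c}$ in the set $\left( \bigcap_{j \in I} p_j {\mathbb Z}/n{\mathbb Z} \right) \setminus \left( \bigcup_{j \in \bar{I}} p_j {\mathbb Z}/n{\mathbb Z} \right)$ of Lemma~\ref{lem:gr}. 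One small point must be checked here: divisibility of a residue by $p_j$ is well defined modulo $n$, because $p_j \mid n$. Corollary~\ref{cor:gr_n} then gives
\begin{align*}
\mathcal{B}(\{T_{n,\hat{c}}^t x_o\}_{t \geq K_n}) = \mathcal{B}(\{T_{n,c_I}^t x_o\}_{t \geq K_n}).
\end{align*}
For $I = \emptyset$, the first part of the proof of Lemma~\ref{lem:gr} covers the units directly.

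Consequently, the map $c \mapsto \mathcal{B}(\{T_{n,c}^t x_o\}_{t \geq K_n})$, restricted to coefficients with infinite orbits, factors through $c \mapsto I(c)$. Its image is therefore contained in $\{ \mathcal{B}(\{T_{n,c_I}^t x_o\}_{t \geq K_n}) \mid I \subsetneq \{1, \ldots, m\} \}$, which has at most $2^m - 1$ elements. This proves the bound.

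There is no serious obstacle, since the statement only asks for an upper bound and distinctness is not needed. The step that needs the most care is matching the hypotheses of Lemma~\ref{lem:gr}. That lemma is stated for a specific $c = \prod_{j \in I} p_j^{k_j}$ and a companion $\hat{c}$, so we must verify that every admissible coefficient arises as such a $\hat{c}$ for exactly one $I$, including the case $\hat{c} = c_I$ itself. We must also use the uniform time threshold $K_n$ from Corollary~\ref{cor:gr_n}, rather than the $I$-dependent threshold $K_c$, so that all classes are compared on the same time range $t \geq K_n$.
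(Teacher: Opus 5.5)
Your proof is correct and follows essentially the same route as the paper. You classify each admissible $c$ by the set $I(c)$ of primes $p_j$ dividing it, use Lemma~\ref{lem:INF01} to identify infinite orbits with proper subsets $I$, invoke Lemma~\ref{lem:gr} and Corollary~\ref{cor:gr_n} to show the orbit on $t \geq K_n$ depends only on $I$, and count the $2^m-1$ proper subsets; your extra care about the uniform threshold $K_n$, the unit case $I=\emptyset$, and well-definedness of divisibility by $p_j$ modulo $n$ just makes explicit what the paper leaves implicit.
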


\begin{proof}
According to Lemma~\ref{lem:gr} and Corollary~\ref{cor:gr_n}, the binary orbit $\mathcal{B} (\{T_{n, c}^t x_o\}_{t \geq K_n})$ is uniquely determined by the subset of indices $I \subsetneq \{1, 2, \ldots, m\}$ such that $p_i$ divides $c$ for each $i \in I$.
Specifically, $c$ must be chosen such that the resulting orbit is infinite, which by Lemma~\ref{lem:INF01} means $c \neq 0 \pmod{\mathrm{rad}(n)}$. 
This condition implies that the index set $I$ of the prime factors dividing $c$ must be a proper subset of $\{1, 2, \ldots, m\}$.

For each subset $I$ with $|I| = l$, where $0 \leq l \leq m-1$, there are $_m \mathrm{C}_l$ possible choices.
The case $l=0$ (where $I = \emptyset$) corresponds to $c \in (\mathbb{Z} / n \mathbb{Z})^\times$.
The total number of such distinct orbits is then bounded by the number of all possible proper subsets of the $m$ prime factors
\begin{equation}
\sum_{l=0}^{m-1} {}_m \mathrm{C}_l = \left( \sum_{l=0}^{m} {}_m \mathrm{C}_l \right) - {}_m \mathrm{C}_m = 2^m - 1.
\end{equation}
This completes the proof.
\end{proof}

Let us refer again to Figure~\ref{fig:nc1}. 
For each column corresponding to a specific $n$, cells containing the same value represent LCA-UWs whose binary orbits coincide for time step $t \geq K_n$.
The following relationship holds between the number of states $n$ and the coefficient $c$, allowing us to reduce the analysis of general orbits to those with $c=1$.

\begin{lem}
\label{lem:varN01_0}
Let the number of states be $n = \prod_{j=1}^m p_j^{k_j}$ and consider a coefficient $c = \prod_{j \in I} p_j^{k_j}$ for a proper subset $I \subsetneq \{1, 2, \ldots, m\}$. 
Then, the infinite binary orbits of the LCA-UWs $T_{n, c}$ and $T_{n/c, 1}$ satisfy
\begin{align}
\mathcal{B} (\{T_{n, c}^t x_o \}_{t \geq 0}) = \mathcal{B} (\{T_{n/c, 1}^t x_o \}_{t \geq 0}).
\end{align}
\end{lem}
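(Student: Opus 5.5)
The plan is to compare both orbits site by site through the explicit formula of Lemma~\ref{lem:coef00}. The point is that the multinomial sum does not depend on the modulus. For each $t \geq 0$ and $\vb*{i} \in {\mathbb Z}^D$, let
\begin{align*}
A_{\vb*{i}, t} := \sum_{\substack{\sum_{j=1}^J t_j = t\\ \sum_{j=1}^J t_j \vb*{r}_j = - \vb*{i} }} \frac{t!}{t_1! t_2! \cdots t_J!} \in {\mathbb Z}_{\geq 0},
\end{align*}
computed as an ordinary integer. Lemma~\ref{lem:coef00} then gives $(T_{n, c}^t x_o)_{\vb*{i}} = c^t A_{\vb*{i}, t} \pmod n$ and $(T_{n/c, 1}^t x_o)_{\vb*{i}} = A_{\vb*{i}, t} \pmod{n/c}$. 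It therefore suffices to prove that, for every $t \geq 0$ and every $\vb*{i}$,
\begin{align*}
c^t A_{\vb*{i}, t} \equiv 0 \pmod n \iff A_{\vb*{i}, t} \equiv 0 \pmod{n/c}.
\end{align*}

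I will first dispose of the trivial cases. If $I = \emptyset$, then $c = 1$ and the two automata are literally the same, so there is nothing to prove. For $I \neq \emptyset$, since $I$ is a proper subset, $n/c = \prod_{j \in \bar{I}} p_j^{k_j} \geq 2$, so $T_{n/c,1}$ is a genuine LCA-UW. At $t = 0$ both orbits equal the SSS $x_o$, because $A_{\vb*{0},0} = 1$, $A_{\vb*{i},0} = 0$ for $\vb*{i} \neq \vb*{0}$, and $1 \not\equiv 0$ modulo both $n$ and $n/c$. So the binary patterns agree at $t = 0$.

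The main step is $t \geq 1$, using the Chinese remainder theorem with the coprime factorization $n = c \cdot (n/c)$, where $\gcd(c, n/c) = 1$ because the two factors involve disjoint sets of primes. The statement $c^t A_{\vb*{i}, t} \equiv 0 \pmod n$ is equivalent to the pair of conditions $c^t A_{\vb*{i},t} \equiv 0 \pmod c$ and $c^t A_{\vb*{i},t} \equiv 0 \pmod{n/c}$. The first condition holds automatically since $t \geq 1$. In the second, $c^t$ is invertible modulo $n/c$, so it is equivalent to $A_{\vb*{i},t} \equiv 0 \pmod{n/c}$. This gives the desired equivalence, and hence equality of the binary projections $\mathcal{B}$ at every site and every time $t \geq 0$.

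This is essentially the argument of Lemma~\ref{lem:gr}, with the modulus $n/c$ moved into the second automaton. The only point needing care is that the exact exponents $p_j^{k_j}$ in $c$ make $c \mid c^t$ already at $t = 1$. That is why, unlike Lemma~\ref{lem:gr}, no delay $K_c$ is needed and the equality holds from $t = 0$. The $t = 0$ case, where $c^0 = 1$ does not vanish modulo $c$, has to be checked separately, which I do above.
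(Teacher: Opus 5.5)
Your proof is correct and follows the same route as the paper. Both proofs write the two states through the modulus-independent multinomial count $A_{\vb*{i},t}$ from Lemma~\ref{lem:coef00} and reduce the claim to $c^t A_{\vb*{i},t}\equiv 0 \pmod n \iff A_{\vb*{i},t}\equiv 0 \pmod{n/c}$. You also supply two things the paper's proof asserts without comment: the coprime-splitting (CRT) justification of that equivalence, and the separate check at $t=0$, where the equivalence holds only because $A_{\vb*{i},0}\in\{0,1\}$.
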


\begin{proof}
From Lemma~\ref{lem:coef00}, for a given $c$, there exists $A_{\vb*{i}, t} \in {\mathbb Z}_{\geq 0}$ for each site $\vb*{i}$ and time step $t$ such that
\begin{align}
(T_{n, c}^t x_o)_{\vb*{i}} &= A_{{\vb*{i}}, t} c^t \pmod n.\\
(T_{n/c, 1}^t x_o)_{\vb*{i}} &= A_{{\vb*{i}}, t} \pmod {n/c}.
\end{align}
For any $\vb*{i} \in {\mathbb Z}^D$ and $t \geq 0$, the following equivalences hold
\begin{align}
(T^t_{n, c} x_o)_{\vb*{i}} = 0 \pmod n & \iff A_{{\vb*{i}}, t} = 0 \pmod {n/c} \\
&\iff (T^t_{n/c, 1} x_o)_{\vb*{i}} = 0 \pmod {n/c}.
\end{align}
Thus, we obtain $\mathcal{B} (\{T_{n, c}^t x_o \}_{t \geq 0}) = \mathcal{B} (\{T_{n/c, 1}^t x_o \}_{t \geq 0})$.
\end{proof}

When the coefficient is fixed at $c=1$, the binary orbits corresponding to different numbers of states $n$ are distinct.

\begin{lem}
\label{lem:varN01_1} 
For $c=1$, if $n \neq \hat{n}$, then the binary orbits of the LCA-UWs $T_{n, 1}$ and $T_{\hat{n}, 1}$ satisfy
\begin{align}
\mathcal{B} (\{T_{n, 1}^t x_o \}_{t \geq 0}) \neq \mathcal{B} (\{T_{\hat{n}, 1}^t x_o \}_{t \geq 0}).
\end{align}
\end{lem}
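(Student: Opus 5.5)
The plan is to exhibit, for each pair $n \neq \hat{n}$, a concrete site and time at which exactly one of the two orbits $\{T_{n,1}^t x_o\}$ and $\{T_{\hat{n},1}^t x_o\}$ vanishes. With $c=1$, Lemma~\ref{lem:coef00} gives
\[
(T_{n,1}^t x_o)_{\vb*{i}} = A_{\vb*{i},t} \pmod n ,
\]
where $A_{\vb*{i},t}$ is the integer sum of multinomial coefficients, and it does not depend on $n$. So it suffices to find a family of sites where $A_{\vb*{i},t}$ is a simple, controllable integer. I will aim for sites where $A_{\vb*{i},t} = t$ exactly, and then choose $t$ as a multiple of one modulus but not of the other.

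\textbf{Step 1 (construction of the site).} I assume the offsets $\vb*{r}_1,\dots,\vb*{r}_J$ are pairwise distinct and $J \geq 2$. For $J=1$ every orbit is a single moving $1$, so the statement fails in that case, and I will state this restriction explicitly. As in the proof of Lemma~\ref{lem:INF01}, take a vertex $\vb*{r}_M$ of the convex hull and an edge of the hull through $\vb*{r}_M$. Such an edge exists since the hull is not a single point. Among the offsets lying on this edge other than $\vb*{r}_M$, let $\vb*{r}_K$ be the one closest to $\vb*{r}_M$. Set $\vb*{i} = -\bigl((t-1)\vb*{r}_M + \vb*{r}_K\bigr)$.

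\textbf{Step 2 (uniqueness of the decomposition).} This is the main obstacle. I need to show that the only solution of
\[
\sum_j t_j = t, \qquad \sum_j t_j \vb*{r}_j = (t-1)\vb*{r}_M + \vb*{r}_K
\]
is $t_M = t-1$, $t_K = 1$. The argument has two parts.
\begin{enumerate}
\item Take a linear functional exposing the edge. It is maximized on the hull exactly along the edge, and the target point attains $t$ times that maximum. Hence every $\vb*{r}_j$ with $t_j > 0$ must lie on the edge.
\item Parametrize the edge linearly with $\vb*{r}_M$ at position $0$, the other offsets on the edge at positive positions, and the smallest of these, $d > 0$, at $\vb*{r}_K$. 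The positions of the $t$ chosen points sum to $d$, and each position is either $0$ or at least $d$. So exactly one chosen point has positive position, and that position equals $d$. By distinctness of the offsets, that point is $\vb*{r}_K$.
\end{enumerate}
Consequently $A_{\vb*{i},t} = t!/((t-1)!\,1!) = t$.

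\textbf{Step 3 (conclusion).} Without loss of generality $n < \hat{n}$, and take $t = n$ at the site from Step 1. Then
\[
(T_{n,1}^n x_o)_{\vb*{i}} = n \equiv 0 \pmod n, \qquad (T_{\hat{n},1}^n x_o)_{\vb*{i}} = n \not\equiv 0 \pmod{\hat{n}}
\]
since $0 < n < \hat{n}$. So the two binary orbits differ at $(\vb*{i}, n)$ under $\mathcal{B}$.

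This argument also gives $\mathcal{B}(\{T_{n,1}^t x_o\}_{t\geq K}) \neq \mathcal{B}(\{T_{\hat{n},1}^t x_o\}_{t\geq K})$ for every $K$, by taking $t = sn$ with $s$ large and $\hat{n} \nmid sn$, for instance $s = 1 + \ell \hat{n}$. This stronger form is what is needed to combine the lemma with Corollary~\ref{cor:gr_n}.
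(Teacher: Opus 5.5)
Your proof is correct and follows the paper's argument: you take the site reached by using an extreme offset $t-1$ times and one other offset once, where the count $A_{\vb*{i},t}$ equals $t$, and evaluate at $t=n$ for the smaller modulus. Your additions go beyond the paper: choosing $\vb*{r}_K$ as the nearest offset along a hull edge through $\vb*{r}_M$, with the exposing-functional argument, proves the uniqueness that the paper only asserts; the explicit hypotheses ($J\geq 2$, distinct offsets) identify when the lemma actually holds; and the tail version via $t=(1+\ell\hat{n})n$ is what Proposition~\ref{prop:varA} really needs, since that proposition compares orbits only for $t\geq K_n$.
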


\begin{proof}
Let $\vb*{r}_M$ be an extreme point of the convex hull of the neighborhood set $\{ \vb*{r}_1, \ldots, \vb*{r}_J \}$, and let $\vb*{r}_{\tilde{M}} \in \{ \vb*{r}_1, \ldots, \vb*{r}_J \} \setminus \{\vb*{r}_M\}$ be another neighborhood vector such that $\vb*{i} = -t \vb*{r}_M - \vb*{r}_{\tilde{M}}$ lies on the boundary of the support, ensuring that no other combination of neighborhood vectors can reach site $\vb*{i}$.
According to Lemma~\ref{lem:coef00}, for any time step $t+1$, we can consider the state at a specific site $\vb*{i}$ that is reached by choosing the vector $\vb*{r}_M$ exactly $t$ times and the vector $\vb*{r}_{\tilde{M}}$ exactly once. This site is given by $\vb*{i} = -t \vb*{r}_M - \vb*{r}_{\tilde{M}}$. 
At this site, the multinomial coefficient $A_{\vb*{i}, t+1}$, which represents the number of paths from the origin to $\vb*{i}$ in $t+1$ steps, simplifies to
\begin{align}
A_{\vb*{i}, t+1} = \frac{(t+1)!}{t! 1!} = t+1.
\end{align}
Then, the states of the orbits for $T_{n, 1}$ and $T_{\hat{n}, 1}$ at this site are
\begin{align}
(T_{n, 1}^{t+1} x_o)_{\vb*{i}} &= t+1 \pmod n, \quad (T_{\hat{n}, 1}^{t+1} x_o)_{\vb*{i}} = t+1 \pmod{\hat{n}}.
\end{align}
Without loss of generality, assume $n < \hat{n}$. By setting the time step such that $t+1 = n$, the states satisfy
\begin{align}
(T_{n, 1}^n x_o)_{\vb*{i}} &= n = 0 \pmod n, \quad (T_{\hat{n}, 1}^n x_o)_{\vb*{i}} = n \neq 0 \pmod{\hat{n}}.
\end{align}
It follows that $(\mathcal{B}\{T_{n, 1}^t x_o\}_{t \geq 0})_{\vb*{i}, n} = 0$ while $(\mathcal{B}\{T_{\hat{n}, 1}^t x_o\}_{t \geq 0})_{\vb*{i}, n} = 1$. Since there exists at least one space-time point $(\vb*{i}, n)$ where the binary values differ, we conclude
\begin{equation}
\mathcal{B} (\{T_{n, 1}^t x_o \}_{t \geq 0}) \neq \mathcal{B} (\{T_{\hat{n}, 1}^t x_o \}_{t \geq 0}).
\end{equation}
\end{proof}

Combining the preceding lemmas, we obtain the following result regarding the variations of orbits for a fixed number of states $n$ while varying the coefficient $c$.

\begin{prop}
\label{prop:varA}
For a fixed number of states $n = \prod_{j=1}^m p_j^{k_j}$ of an LCA-UW $T_{n, c}$, the infinite binary orbit $\mathcal{B} (\{T_{n, c}^t x_o\}_{t \geq K_n})$ can be classified into exactly $2^m - 1$ distinct types as the coefficient $c$ varies from $1$ to $n-1$.
\end{prop}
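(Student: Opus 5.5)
The plan is to combine the upper bound of Lemma~\ref{lem:varA01} with a matching lower bound. Lemma~\ref{lem:varA01} already shows there are at most $2^m-1$ distinct infinite binary orbits $\mathcal{B}(\{T_{n,c}^t x_o\}_{t\geq K_n})$. So it remains to show that the $2^m-1$ classes indexed by proper subsets $I \subsetneq \{1,\dots,m\}$ are realized and pairwise distinct.

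\textbf{Step 1: realization.} For each proper subset $I$, take the representative $c_I = \prod_{j\in I} p_j^{k_j}$. We have $1 \le c_I < n$, and $c_I \ne 0 \pmod{\mathrm{rad}(n)}$ since $I \ne \{1,\dots,m\}$. By Lemma~\ref{lem:INF01} its orbit grows infinitely. By Lemma~\ref{lem:gr} and Corollary~\ref{cor:gr_n}, every $c$ whose set of prime divisors among the $p_j$ equals $I$ has the same binary orbit from $t\geq K_n$ on as $c_I$. Lemma~\ref{lem:varN01_0} then identifies this orbit with $\mathcal{B}(\{T_{n_I,1}^t x_o\}_{t\geq K_n})$, where $n_I := n/c_I = \prod_{j\in\bar I} p_j^{k_j}$. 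Distinct $I$ give distinct $n_I$, since the set of primes dividing $n_I$ is exactly $\{p_j : j\in \bar I\}$.

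\textbf{Step 2: distinctness, the main obstacle.} Lemma~\ref{lem:varN01_1} separates $T_{n_I,1}$ and $T_{n_{I'},1}$ only over all $t\geq 0$. Its witness is at time $t=\min(n_I,n_{I'})$, which may be smaller than $K_n$. So I would not cite the lemma verbatim but rerun its argument at a later time. Assume $n_I < n_{I'}$ and consider the boundary site $\vb*{i} = -t\vb*{r}_M - \vb*{r}_{\tilde M}$ at time $t+1$, where the state of $T_{N,1}$ equals $t+1 \pmod N$. Choose $t+1 = s\, n_I$ with $s$ large, so that $t+1 \ge K_n$ and $t+1$ is not a multiple of $n_{I'}$. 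This is possible because $n_I < n_{I'}$, hence $n_{I'}\nmid n_I$ (if $n_{I'} \mid n_I$ then $n_{I'} \le n_I$). Concretely, take $s \equiv 1 \pmod{n_{I'}}$: then $s\,n_I \equiv n_I \not\equiv 0 \pmod{n_{I'}}$.

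At this space-time point the binary value is $0$ for $n_I$ and $1$ for $n_{I'}$, so the two orbits restricted to $t\geq K_n$ differ. This also relies on the geometric claim used in Lemma~\ref{lem:varN01_1}: that a boundary site reachable only by the combination ($t$ copies of $\vb*{r}_M$, one copy of $\vb*{r}_{\tilde M}$) exists for all $t$. I would take that from the earlier lemma, choosing $\vb*{r}_{\tilde M}$ along an edge of the convex hull at the vertex $\vb*{r}_M$, which requires $J\geq 2$.

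\textbf{Step 3: conclusion.} Steps 1 and 2 give exactly $2^m-1$ types, with the types corresponding bijectively to proper subsets $I$, or equivalently to the reduced state sizes $n_I$.
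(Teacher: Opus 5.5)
Your proposal is correct and follows the paper's route. Like the paper, you take the upper bound from Lemma~\ref{lem:varA01}, use the representatives $c_I=\prod_{j\in I}p_j^{k_j}$, reduce them via Lemma~\ref{lem:varN01_0} to $T_{n/c_I,1}$ with pairwise distinct $n/c_I$, and separate these using the boundary site of Lemma~\ref{lem:varN01_1} whose state is $t+1$. The one difference is an improvement on your side: the paper cites Lemma~\ref{lem:varN01_1} verbatim, but that lemma's witness time $\min(n_I,n_{I'})$ can fall below $K_n$ (e.g.\ $n=96=2^5\cdot 3$, with $n_I=3$, $n_{I'}=32$, $K_n=5$), whereas your choice $t+1=s\,n_I$ with $s\equiv 1 \pmod{n_{I'}}$ and $s$ large places the witness inside the window $t\geq K_n$ that the proposition concerns.
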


\begin{proof}
According to Lemma~\ref{lem:varA01}, the number of distinct infinite binary orbits is at most $2^m - 1$. 
This upper bound corresponds to the number of ways to choose a proper subset $I \subsetneq \{1, \ldots, m\}$ of prime factors that divide the coefficient $c$.
To show that these orbits are truly distinct, let $I$ and $I'$ be distinct proper subsets of indices. 
According to Lemma~\ref{lem:varN01_0}, the binary orbits of $T_{n, c}$ and $T_{n, c'}$, where $c = \prod_{j \in I} p_j^{k_j}$ and $c' = \prod_{j \in I'} p_j^{k_j}$, coincide with those of $T_{n/c, 1}$ and $T_{n/c', 1}$, respectively. 
Since $I \neq I'$, the resulting values must be distinct, i.e., $n/c \neq n/c'$. 
Finally, Lemma~\ref{lem:varN01_1} ensures that if the number of states differs, the binary orbits for the case $c=1$ are necessarily distinct. Therefore, each unique proper subset $I$ yields a unique infinite binary orbit, concluding that there are exactly $2^m - 1$ distinct types of orbits.
\end{proof}

Let us refer to Figure~\ref{fig:nc1}. 
For a fixed $n = \prod_{j=1}^m p_j^{k_j}$, we consider a coefficient $c$ satisfying the condition involving a proper subset of indices $I \subsetneq \{1, 2, \dots, m\}$ as specified in Theorem~\ref{thm:varN}.
Defining $\hat{c} = \prod_{j \in I} p_j^{k_j}$, it can be observed that the infinite binary orbit of $T_{n, c}$ eventually coincides with that of an LCA-UW with a reduced state size $n/\hat{c}$ and a unit coefficient.
This suggests that the binary projection $\mathcal{B}$ effectively acts as a quotient map that filters out the algebraic complexities introduced by $c$, reducing the system to its fundamental representation $T_{n/\hat{c}, 1}$. 
In the following, we formalize this reduction principle, which allows us to classify the entire diversity of LCA-UW patterns by focusing on the state size parameter alone.

\begin{thm}
\label{thm:varN}
Let $n = \prod_{j=1}^m p_j^{k_j}$ be the number of states with its prime factorization.
Suppose that $c \in {\mathbb Z} / n {\mathbb Z}$ satisfies
\begin{align}
c \in \left( \bigcap_{j \in I} p_j {\mathbb Z} / n {\mathbb Z} \right) \setminus \left( \bigcup_{j \in \bar{I}} p_j {\mathbb Z} / n {\mathbb Z} \right),
\end{align}
where $I \subsetneq \{1, 2, \dots, m\}$ is a proper subset of indices and $\bar{I}$ is its complement. 
Let $\hat{c} = \prod_{j \in I} p_j^{k_j}$. 
We consider the canonical surjective ring homomorphism $\phi: {\mathbb Z} / n {\mathbb Z} \to {\mathbb Z} / (n/\hat{c}) {\mathbb Z}$ defined by $\phi(a) = a \pmod{n/\hat{c}}$ for any $a \in {\mathbb Z} / n {\mathbb Z}$.
We define the map $\Phi: ({\mathbb Z} / n {\mathbb Z})^{{\mathbb Z}^D \times {\mathbb Z}_{\geq 0}} \to ({\mathbb Z} / (n/\hat{c}) {\mathbb Z})^{{\mathbb Z}^D \times {\mathbb Z}_{\geq 0}}$ induced by $\phi$ such that
\begin{align}
(\Phi \{ T_{n, c}^t x_o \}_{t \geq 0})_{\vb*{i}, t} = \phi((T_{n, c}^t {x_o}_{t \geq 0})_{\vb*{i}})
\end{align}
for any orbit $\{T_{n, c}^t x_o\}_{t \geq 0}$ with spatial index $\vb*{i} \in {\mathbb Z}^D$ and time step $t \in {\mathbb Z}_{\geq 0}$.
Then, the map $\Phi$ represents an algebraic reduction that commutes with the LCA-UW dynamics under the binary projection $\mathcal{B}$, and the following equality holds
\begin{align}
\mathcal{B} (\{T_{n, c}^t x_o \}_{t \geq K_n}) = \mathcal{B} \circ \Phi (\{T_{n, c}^t x_o \}_{t \geq K_n}) = \mathcal{B} (\{T_{n/\hat{c}, 1}^t x_o \}_{t \geq K_n}),
\end{align}
where $K_n = \max_{1 \leq j \leq m} \{k_j\}$.
Equivalently, the following diagram commutes.
\[
\xymatrix{
\{T_{n, c}^t x_o \}_{t \geq K_n} \ar[r]^{\Phi} \ar[d]_{\mathcal{B}} & \{T_{n/\hat{c}, 1}^t x_o \}_{t \geq K_n} \ar[d]^{\mathcal{B}} \\
\mathcal{B} (\{T_{n, c}^t x_o \}_{t \geq K_n}) \ar@{=}[r] & \mathcal{B} (\{T_{n/\hat{c}, 1}^t x_o \}_{t \geq K_n})
}
\]
\end{thm}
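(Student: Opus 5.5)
The proof chains Lemma~\ref{lem:gr} and Lemma~\ref{lem:varN01_0} through the intermediate coefficient $\hat{c}=\prod_{j\in I}p_j^{k_j}$. Write, by Lemma~\ref{lem:coef00}, $(T_{n,c}^t x_o)_{\vb*{i}} = c^t A_{\vb*{i},t} \pmod n$, where $A_{\vb*{i},t}$ is the multinomial sum. This integer is independent of $n$ and $c$, and it equals $(T_{N,1}^t x_o)_{\vb*{i}} \pmod N$ for every $N$.

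\emph{Step 1 (first equality).} I will show that for $t\ge K_n$, $c^tA_{\vb*{i},t}\equiv 0 \pmod n$ iff $A_{\vb*{i},t}\equiv 0 \pmod{n/\hat{c}}$. I argue prime by prime via the Chinese remainder theorem.
\begin{itemize}
\item For $j\in I$, $p_j\mid c$, so $p_j^{k_j}\mid c^t$ once $t\ge k_j$; in particular for all $t\ge K_n$. These components of $n$ are therefore automatically zero.
\item For $j\in\bar I$, $c$ is a unit modulo $p_j^{k_j}$, so $c^tA\equiv 0 \pmod{p_j^{k_j}}$ iff $A\equiv 0 \pmod{p_j^{k_j}}$.
\end{itemize}
Since $n/\hat{c}=\prod_{j\in\bar I}p_j^{k_j}$, this gives the claim. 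The same reasoning applies to $\phi((T^t_{n,c}x_o)_{\vb*{i}}) = c^tA_{\vb*{i},t} \bmod n/\hat{c}$, because $c$ is a unit modulo $n/\hat{c}$. Hence $\phi$ of a state is zero iff $A_{\vb*{i},t}\equiv0 \pmod{n/\hat c}$ iff the state itself is zero (for $t\ge K_n$). That is, $\mathcal{B}=\mathcal{B}\circ\Phi$ on $t\ge K_n$. Notably, this step is exactly the point where the restriction $t\ge K_n$ is needed: for smaller $t$ the state mod $n$ can be nonzero while its image under $\phi$ vanishes.

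\emph{Step 2 (second equality).} Modulo $n/\hat c$, the image $\Phi$ of the orbit has entries $c^tA_{\vb*{i},t}$, with $c$ invertible. So its zero set coincides with that of $A_{\vb*{i},t}\bmod n/\hat c = (T^t_{n/\hat c,1}x_o)_{\vb*{i}}$. This is the unit-coefficient argument of the case $I=\emptyset$ in Lemma~\ref{lem:gr}, applied with modulus $n/\hat c$. Alternatively, I can cite Lemma~\ref{lem:gr} (giving $T_{n,c}\sim T_{n,\hat c}$ for $t\ge K_c$, hence for $t\ge K_n$ by Corollary~\ref{cor:gr_n}) and then Lemma~\ref{lem:varN01_0} ($T_{n,\hat c}\sim T_{n/\hat c,1}$ for all $t$), and restrict to $t\ge K_n$.

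\emph{Step 3 (commuting diagram).} I then interpret the diagram. Note that $\Phi$ does not literally send the orbit to $\{T^t_{n/\hat c,1}x_o\}$: it sends it to the orbit of $T_{n/\hat c,\phi(c)}$, whose values differ from those of $T_{n/\hat c,1}$ by the unit factors $\phi(c)^t$. I would make this explicit by observing that $\phi$ is a ring homomorphism, so $\Phi\circ T_{n,c}=T_{n/\hat c,\phi(c)}\circ\Phi$ and $\Phi(x_o)=x_o$. The top arrow is therefore understood up to this unit rescaling, which $\mathcal{B}$ erases. The main obstacle is getting this interpretation of the diagram precise rather than any computation. The equalities themselves follow from the CRT bookkeeping in Steps 1 and 2.
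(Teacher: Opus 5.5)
Your proof is correct. It runs on the same CRT mechanism as the paper's, but the decomposition is different, and yours actually covers more of the statement.

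The paper's proof just chains earlier results. Corollary~\ref{cor:gr_n} (via Lemma~\ref{lem:gr}) identifies the binary orbit of $T_{n,c}$ with that of $T_{n,\hat c}$ at the same modulus $n$ for $t\ge K_n$. Lemma~\ref{lem:varN01_0} then identifies that with $T_{n/\hat c,1}$. This yields only the outer equality $\mathcal{B}(\{T_{n,c}^t x_o\}_{t\ge K_n})=\mathcal{B}(\{T_{n/\hat c,1}^t x_o\}_{t\ge K_n})$. The middle term $\mathcal{B}\circ\Phi$ and the diagram are never addressed.

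Your Steps 1--2 go directly through the reduced modulus $n/\hat c$. You use the same split: components $j\in I$ are killed by $c^t$ once $t\ge K_n$, and on components $j\in\bar I$ the coefficient $c$ is a unit. But you apply it to $\phi$ of the states rather than to an intermediate orbit at modulus $n$. The paper's chain appears only as your alternative in Step 2.

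Your route buys three things:
\begin{itemize}
\item It proves all three terms of the displayed equality.
\item It shows that $t\ge K_n$ is needed only for $\mathcal{B}=\mathcal{B}\circ\Phi$. The second equality holds for all $t\ge 0$, because $\phi(c)$ is a unit modulo $n/\hat c$.
\item Through $\Phi\circ T_{n,c}=T_{n/\hat c,\phi(c)}\circ\Phi$, it shows the top arrow of the diagram lands on the orbit of $T_{n/\hat c,\phi(c)}$. That orbit agrees with the orbit of $T_{n/\hat c,1}$ only up to the unit factors $\phi(c)^t$. For example, $n=6$, $c=2$ gives $\phi(c)=2\neq 1$ in $\mathbb{Z}/3\mathbb{Z}$. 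The paper's statement glosses over this.
\end{itemize}

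In exchange, the paper's route is shorter because it reuses lemmas already on record.
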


\begin{proof}
According to the classification in Proposition~\ref{prop:varA}, there are $2^m - 1$ distinct types of orbits. 
Among these, any binary orbit generated with a coefficient $c > 1$ satisfying the divisibility conditions stated above is shown to coincide with the binary orbit of $T_{n/\hat{c}, 1}$ for $t \geq K_n$, as established by Corollary~\ref{cor:gr_n}.
From Lemma~\ref{lem:varN01_0}, it holds that $\mathcal{B} (\{T_{n, \hat{c}}^t x_o \}_{t \geq 0}) = \mathcal{B} (\{T_{n/\hat{c}, 1}^t x_o \}_{t \geq 0})$.
Since $n/\hat{c}$ is an integer in $\mathbb{Z}_{\geq 2}$, the full diversity of infinite binary orbits is entirely covered by examining only the cases where the coefficient is unity across all $n \in {\mathbb Z}_{\geq 2}$.
\end{proof}

Let us refer again to Figure~\ref{fig:nc1}. 
For any given $n$, the infinite binary orbit for any $c > 1$ is uniquely determined by the orbit of $c=1$ with a corresponding smaller state size. 
Thus, all infinite binary orbits can be understood by investigating only the $c=1$ case.

\section{Concluding remarks}
\label{sec:cr}

In this study, we have investigated the diversity of infinite binary orbits generated by the LCA-UWs $T_{n, c}$. 
Our primary focus was to understand how the variation of the number of states $n$ and the coefficient $c$ influences the resulting spatio-temporal patterns from a binary perspective.
The core finding of this paper is the fundamental reduction of the parameter space. 
Through Lemma~\ref{lem:varN01_0} and Proposition~\ref{prop:varA}, we demonstrated that any orbit with a coefficient $c > 1$ eventually coincides with the orbit of an LCA-UW with a smaller state size $n/c$ and a unit coefficient $c=1$. 
This implies that the seemingly vast variety of patterns produced by different $(n, c)$ pairs is, in fact, inherently contained within the set of patterns generated by the $c=1$ cases across varying $n$.
This simplification, formalized in Theorem~\ref{thm:varN}, provides a significant advantage for the classification of LCA-UWs. 
By reducing the two-dimensional parameter search $(n, c)$ to a one-dimensional investigation of $n$, we have established a more streamlined framework for analyzing LCA-UW dynamics. 

In future work, we will use this reduction to study the specific topological properties and fractal dimensions of these orbits. 
By focusing on the $T_{n, 1}$ series, we aim to understand the full structure of LCA-UW patterns. 
We also plan to apply our classification to more general LCA starting from the SSS $x_o$. 
This will help us analyze the behavior and patterns of a much wider range of LCAs.

\section*{Acknowledgment}
This work was partly supported by Grants-in-Aid for Scientific Research (22K03435, 26K06936) funded by the Japan Society for the Promotion of Science.

\end{document}